\documentclass[11pt]{article}

\usepackage[T1]{fontenc}
\usepackage[utf8]{inputenc}
\usepackage[margin=1.3in,tmargin=1.33in,bmargin=1.33in]{geometry}
\usepackage{amsmath,amssymb,amsthm,booktabs,microtype,aliascnt}
\usepackage{caption}
\usepackage{cite}
\usepackage{hyperref}
\hypersetup{colorlinks,citecolor=blue,linkcolor=blue,urlcolor=blue}
\usepackage[nameinlink]{cleveref}
\usepackage{titlesec}
\titleformat{\section}
  {\centering\normalfont\normalsize}
  {\thesection.} 
  {0.1in}
  {\bfseries}
\titleformat{\subsection}
  {\centering\normalfont\normalsize}
  {\thesubsection.} 
  {0.1in}
  {\bfseries}
  
\expandafter\def\expandafter\normalsize\expandafter{%
  \normalsize
  \setlength\abovedisplayskip{7pt}%
  \setlength\belowdisplayskip{7pt}%
  \setlength\abovedisplayshortskip{0pt}%
  \setlength\belowdisplayshortskip{5pt}%
}

\newcommand{\R}{\mathbb R}
\newcommand{\tr}{\operatorname{tr}}
\newcommand{\vol}{\operatorname{vol}}
\newcommand{\area}{\operatorname{area}}
\newcommand{\Sym}{\operatorname{Sym}}
\newcommand{\Arch}{P_{\mathrm{Arch}}}
\newcommand{\Ical}{\mathcal I}
\newcommand{\mb}[1]{\mathbf{#1}}
\newcommand{\e}{\mathrm e}
\newcommand{\K}{\mathsf K}
\newcommand{\F}{\mathsf F}
\newcommand{\br}[1]{\langle{#1}\rangle}

\newtheorem{theorem}{Theorem}
\newaliascnt{proposition}{theorem}
\newtheorem{proposition}[proposition]{Proposition}
\aliascntresetthe{proposition}
\newaliascnt{lemma}{theorem}
\newtheorem{lemma}[lemma]{Lemma}
\aliascntresetthe{lemma}

\theoremstyle{remark}
\newaliascnt{remark}{theorem}

\aliascntresetthe{remark}

\crefname{theorem}{theorem}{theorems}
\Crefname{theorem}{Theorem}{Theorems}
\crefname{proposition}{proposition}{propositions}
\Crefname{proposition}{Proposition}{Propositions}
\crefname{lemma}{lemma}{lemmas}
\Crefname{lemma}{Lemma}{Lemmas}
\crefname{remark}{remark}{remarks}
\Crefname{remark}{Remark}{Remarks}

\title{Local quadratic isoperimetric stability of the truncated octahedron among parallelohedra}
\author{Lark Song}
\date{}

\begin{document}
\maketitle

\begin{abstract}
We prove that the Archimedean truncated octahedron is a strict local minimizer of the isoperimetric quotient among all parallelohedra in $\R^3$.  After volume normalization and minimization over rotations, the deficit controls the squared Hausdorff distance from the Archimedean cell.  This extends the corresponding result from the five-dimensional lattice-Voronoi subclass to the full ten-dimensional local parameter space.  A six-zone parametrization and Schur's lemma reduce the Hessian to two positive definite $2\times2$ matrices.
\end{abstract}

\section{Introduction}

A \emph{parallelohedron} is a convex polyhedron that admits a face-to-face tiling of Euclidean three-space by translations.  
Minkowski proved that every parallelohedron, as well as each of its facets, is centrally symmetric \cite{Minkowski1897}.
Venkov proved, and McMullen later proved independently, that the face-to-face hypothesis can be dropped \cite{Venkov1954,McMullen1980}. 
Fedorov proved that there are five combinatorially distinct types of parallelohedra: parallelepipeds, hexagonal prisms, rhombic dodecahedra, elongated dodecahedra, and truncated octahedra \cite{Fedorov1885}. 
For a convex body $P\subset\R^3$, we write the isoperimetric quotient
\[
  \Ical(P):=\frac{\area(\partial P)}{\vol(P)^{2/3}}.
\]
The truncated octahedral conjecture asks whether the Archimedean truncated octahedron minimizes this quotient among all parallelohedra in $\R^3$ \cite[Conjecture~7.5]{Bezdek2006}.  
L\'angi proved the analogous result for mean width \cite{Langi2022}.  
Using Selling's parametrization of ternary quadratic forms \cite{Selling1874}, Cesaroni and Novaga recently proved strict local minimality at fixed volume within the subclass of lattice Voronoi cells \cite{CesaroniNovaga2026}. Voronoi conjectured that every parallelohedron is affinely equivalent to a lattice Voronoi cell \cite{Voronoi1908}, which is now known through dimension five \cite{Garber2025}. In dimension three the affine description is classical \cite{Langi2022}, but it does not settle Bezdek's conjecture locally, since $\Ical$ is invariant under similarities but not under general affine maps.

Modulo similarities, the family of lattice Voronoi cells is five-dimensional, whereas the full local parameter space at $\Arch$ is ten-dimensional.  In the chart below, five parameters rescale the six edge zones modulo common dilation, and five describe an independent determinant-one affine metric modulo rotations.  
While the Cesaroni--Novaga result settles stability within the five-dimensional Voronoi subclass, analyzing the full parameter space requires controlling the five transverse directions along with the mixed second variations.  We prove that the full Hessian is positive definite and obtain a quadratic estimate in volume-normalized Hausdorff distance modulo rotations.

Let $\mathcal P_3$ be the family of parallelohedra in $\R^3$ whose centers of symmetry are at the origin, and fix an Archimedean truncated octahedron $\Arch\in\mathcal P_3$.  For every centered convex body $K\subset\R^3$, define the volume-normalized representative of $K$ relative to $\Arch$
\[
 \widehat K:=\left(\frac{\vol(\Arch)}{\vol(K)}\right)^{1/3}K.
\]
Define
\begin{equation}\label{eq:distance}
 d_{\mathrm{sim}}(P,\Arch):= \inf_{Q\in\mathrm{SO}(3)}d_H(Q\widehat P,\Arch).
\end{equation}
The corresponding quotient metric on $\mathcal P_3$ modulo rotations and positive dilations is
\[
 \delta_{\mathrm{sim}}([P],[R]):=
 \inf_{Q\in\mathrm{SO}(3)}d_H(Q\widehat P,\widehat R).
\]
Thus $d_{\mathrm{sim}}(P,\Arch)=\delta_{\mathrm{sim}}([P],[\Arch])$.
Compactness of $\mathrm{SO}(3)$ shows $\delta_{\mathrm{sim}}([P],[R])=0$ exactly when $P$ and $R$ differ by a rotation and a uniform rescaling.  We call such bodies \emph{similar}.
\begin{theorem}[quadratic local stability]\label{thm:main}
There exist a Hausdorff neighborhood $\mathcal U$ of $\Arch$ in $\mathcal P_3$ and a constant $c_{\mathrm{sim}}>0$ such that
\begin{equation}\label{eq:intrinsic-stability}
 \Ical(P)-\Ical(\Arch)\geq
 c_{\mathrm{sim}}d_{\mathrm{sim}}(P,\Arch)^2.
\end{equation}
For $P\in\mathcal U$, equality holds if and only if $P$ is similar to $\Arch$. 
\end{theorem}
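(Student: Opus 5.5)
We argue through a finite-dimensional chart followed by a second-order Taylor expansion. Every parallelohedron Hausdorff-close to $\Arch$ is a truncated octahedron, since the truncated octahedron is the generic, open combinatorial type. Such a body is a zonotope generated by six edge vectors, one for each zone, and the tiling condition forces these to have the form $t_i\,A u_i$. Here the $u_i$ are the six zone directions of $\Arch$, namely the vectors $e_j\pm e_k$ up to the fixed normalization, the $t_i>0$ are scalars, and $A\in\mathrm{GL}(3)$. Equivalently, these are the zonotopes $\sum_i t_i[-Au_i,Au_i]$ whose zones share the combinatorics of the root system $A_3$. We write $A=Q S$ with $Q\in\mathrm{SO}(3)$ and $S$ symmetric positive definite, and quotient by rotations and dilations by imposing $\det S=1$ and $\prod t_i=1$ (or $\sum t_i=6$). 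This produces the ten-dimensional chart $x=(s,m)$ near $x_0=(\mathbf 1,I)$, with $s$ five-dimensional and $m\in\Sym_0(3)$ five-dimensional. First we check that this chart is a local homeomorphism onto a neighbourhood of $[\Arch]$ in the quotient metric $\delta_{\mathrm{sim}}$, and that it is bi-Lipschitz:
\[
c_1|x-x_0|\le d_{\mathrm{sim}}(P_x,\Arch)\le c_2|x-x_0|.
\]
The upper bound is immediate, because support functions depend smoothly on $x$. The lower bound follows from injectivity of the differential modulo rotations. A tangent vector that kills the first-order change of the support function modulo an infinitesimal rotation $\Omega u$ must vanish, and this can be verified on the facet normals.

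Next we compute $F(x)=\Ical(P_x)$ explicitly. For a zonotope, volume is the cubic polynomial $\sum_{i<j<k} t_it_jt_k|\det(Au_i,Au_j,Au_k)|$. Surface area is $\sum_{i<j} 2t_it_j|Au_i\times Au_j|$, a sum over the parallelogram facet pairs. Only nonzero determinants contribute, and the hexagonal facets decompose into parallelograms. Both quantities are smooth near $x_0$, because no cross product or determinant that is nonzero at $x_0$ degenerates. Since $\Arch$ is a critical point, the gradient of $F$ at $x_0$ vanishes modulo the gauge. We verify this by symmetry: the octahedral group $O_h$ acts on the tangent space with no trivial component once dilations and rotations are removed. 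The zone rescalings $s$ carry the permutation representation of $S_4\cong O$ on the six edges of the tetrahedron minus the trivial part, which decomposes as $E\oplus T_2$ of dimensions $2$ and $3$. The traceless metric $m$ decomposes as $E\oplus T_2$ as well. Schur's lemma then reduces the Hessian to a $2\times 2$ block on the $E$-isotypic part and a $2\times 2$ block on the $T_2$-isotypic part, the latter repeated three times. We compute these two matrices, each from one representative direction per summand together with the mixed derivative, and check that both are positive definite. The diagonal entries restricted to the $s$-directions reproduce the Cesaroni--Novaga Voronoi computation, and the off-diagonal terms are the new mixed variations.

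Positive definiteness, combined with Taylor's theorem applied to the $C^3$ function $F$, gives
\[
F(x)-F(x_0)\ge \lambda|x-x_0|^2-C|x-x_0|^3\ge \tfrac{\lambda}{2}|x-x_0|^2
\]
on a small ball. Together with the bi-Lipschitz estimate this yields \eqref{eq:intrinsic-stability} with $c_{\mathrm{sim}}=\lambda/(2c_2^2)$. The equality case follows because the estimate forces $x=x_0$, which means $P$ is similar to $\Arch$. We expect the main obstacle to be twofold: the explicit evaluation of the mixed second derivatives of the area terms $|SAu_i\times SAu_j|$ in the $m$-directions, and a correct identification of the isotypic components so that the two blocks are genuinely $2\times2$. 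A second delicate point is the lower bound in the chart comparison, where we must rule out that a nontrivial chart direction is first-order equivalent to a rotation. We would settle this by comparing the facet-normal distributions.
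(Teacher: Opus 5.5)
Your outline matches the paper's: openness of the truncated-octahedral class, a ten-dimensional chart of five zone rescalings and five trace-free metric directions, criticality from $S_4$-invariance, $\mathcal W_0\oplus\mathcal G_0\simeq 2E\oplus 2T$, Schur's lemma, Taylor's theorem, and a Lipschitz bound on support functions. But the step that carries the theorem is only announced. You never compute the two multiplicity matrices, and their positivity is not automatic, because the mixed entries are large. The paper evaluates them on two explicit two-parameter families (e.g.\ $V_E=4+8\cosh\varepsilon+4\cosh2\varepsilon$ and $V_T=8(1+\cosh\varepsilon)$, with closed-form areas). It uses $\mathcal H(v)=\frac{1}{2V_0^{2/3}}\bigl(D_v^2S-\frac{2S_0}{3V_0}D_v^2V\bigr)$, and since volume is independent of the metric, the mixed terms come from area alone. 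The result is $H_E$ with diagonal entries $(5-2\sqrt3)/2^{10/3}$, $(1+4\sqrt3)/(16\cdot2^{1/3})$ and off-diagonal entry $2^{-7/3}$, and $H_T$ with diagonal entries $(2\sqrt3-1)/2^{7/3}$, $(3+4\sqrt3)/(12\cdot2^{1/3})$ and off-diagonal entry $\sqrt3/(3\cdot2^{1/3})$. In both blocks the squared off-diagonal entry is about two thirds of the product of the diagonal ones, so everything rests on $\det H_E=9(2\sqrt3-3)/2^{23/3}>0$ and $\det H_T=(5+2\sqrt3)/(12\cdot2^{8/3})>0$. Your remark that the $s$-diagonal entries reproduce the Cesaroni--Novaga computation is also incorrect. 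Pure zone rescalings fix every facet normal, since the normals of $Z(\beta,M)$ are $M^{-T}\mb n_b$. A lattice Voronoi cell, by contrast, has normals $v_i$, $v_i+v_j$, so its metric coordinate is determined by the Gram matrix of the superbase ($\mb n_i^TG\mb n_j\propto v_i\cdot v_j$). Hence the Voronoi tangent space is a graph over $\mathcal G_0$ and meets the $s$-slice only in $0$.

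The chart comparison also aims at the wrong statement. Your constant $\lambda/(2c_2^2)$ uses only the upper Lipschitz bound, so the lower bound is never needed. What is needed, and not proved, is that every $P$ in a Hausdorff neighbourhood of $\Arch$ has a lift $x$ inside the small ball where the Taylor estimate holds. Injectivity of the differential at $x_0$ says nothing about lifts far from $x_0$, for instance with degenerating weights or metrics. Moreover, the chart is not locally injective: $x$ and $\pi x$, $\pi\in S_4$, give congruent bodies. So a local homeomorphism exists only on the $S_4$-quotient, and that requires knowing that the admissible relabelings are exactly $S_4$ (the paper's uniqueness lemma for zone coordinates). The paper proves the needed continuity of the inverse directly. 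Weak convergence of surface-area measures matches the fourteen facets of $P_k$ one-to-one with those of $\Arch$. The stability lemma for labeled simple polytopes then gives convergence of labeled vertices and edges, hence of the six generating segments. Finally, the weights and $M$ are recovered continuously from the generators, via the linear relations among the zone lines and the determinant identity for $\beta_{12}\beta_{13}\beta_{23}$. Similarly, ``generic, open combinatorial type'' is an assertion. The paper proves openness from Fedorov's list, the facet count ($\le12$ versus $14$), and weak convergence of surface-area measures.
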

On the labeled similarity slice introduced below, the proof also shows that the pullback of $\Ical$ has positive definite Hessian at the origin.

\subsection{Proof outline}

The other four Fedorov types have at most twelve facets, so none can converge to the fourteen-facet Archimedean cell.  Consequently, it suffices to work locally in the truncated-octahedral class.  The six-zone normal form identifies its labeled similarity space near $\Arch$ with
\[
 \mathcal W_0\oplus\mathcal G_0,
 \qquad \dim\mathcal W_0=\dim\mathcal G_0=5,
\]
where $\mathcal W_0$ records zone rescalings and $\mathcal G_0$ records affine metrics.
We verify that this chart recovers the Hausdorff topology, which allows the coordinate Hessian estimate to be transferred to Hausdorff-near convex bodies.  Tetrahedral $S_4$-symmetry then gives
\[
 \mathcal W_0\oplus\mathcal G_0\simeq2E\oplus2T,
 \qquad E=[2,2],\quad T=[3,1].
\]
Schur's lemma reduces the Hessian to one $2\times2$ multiplicity matrix on the two copies of $E$ and one on the two copies of $T$.  Two explicit two-parameter families determine both matrices, and their determinants are positive.  Taylor's theorem then gives a quadratic coordinate gap, which the support function transfers to \eqref{eq:intrinsic-stability}.

We write $\Sym_3(\R)$ for symmetric $3\times3$ matrices and $\|\cdot\|_F$ for the Frobenius norm.  
Boldface is reserved for vectors attached to the fixed tetrahedral frame.
All representations are over $\R$.

\section{The six-zone similarity space}

We call a parallelohedron \emph{truncated-octahedral} if it is combinatorially equivalent to $\Arch$.  An \emph{edge zone} is a class of mutually parallel edges. 
For the zonotopes below, the six zones correspond to the six generator directions.  
The six-zone normal form gives a Hausdorff-compatible chart on the local similarity quotient of this class.
Fix the normalized frame
\begin{equation}\label{eq:tetrahedron}
\begin{aligned}
 \mb n_1&=2^{-2/3}(1,1,1), \\
 \mb n_2&=2^{-2/3}(1,-1,-1),\\
 \mb n_3&=2^{-2/3}(-1,1,-1),\\
 \mb n_4&=2^{-2/3}(-1,-1,1).
\end{aligned}
\end{equation}
Then $\mb n_1+\cdots+\mb n_4=0$ and
$\det(\mb n_1,\mb n_2,\mb n_3)=1$.  Write
$\mb n_{ij}^{\times}=\mb n_i\times\mb n_j$ and $\beta_{ji}=\beta_{ij}$.
The fixed-frame form of the
six-zone parametrization is
\begin{equation}\label{eq:normal-form}
 Z(\beta,M):=\sum_{1\leq i<j\leq4}
 \left[-\frac12\beta_{ij}M\mb n_{ij}^{\times},
             \frac12\beta_{ij}M\mb n_{ij}^{\times}\right],
 \qquad \beta_{ij}>0,\quad M\in\mathrm{SL}(3,\R).
\end{equation}
These are precisely the centered truncated-octahedral parallelohedra.  Indeed, L\'angi's
parametrization \cite{Langi2022} allows an arbitrary centered tetrahedral frame
$\mb v_i=A\mb n_i$ with
$A\in\mathrm{GL}(3,\R)$.  
Let
\[
 M_0:=(\operatorname{adj}A)^T=\det(A)A^{-T}.
\]
Then $(A\mb n_i)\times(A\mb n_j)=M_0\mb n_{ij}^{\times}$ and
$\det M_0=(\det A)^2>0$.  
Dividing $M_0$ by $(\det M_0)^{1/3}$ and absorbing this positive scalar into the six weights gives \eqref{eq:normal-form}. 
Replacing L\'angi's $[0,g]$ segments by centered segments merely translates the zonotope.  
Conversely,
$Z(\mb{1},I)$ tiles, invertible linear maps preserve translative tilings, and McMullen's
equivalence lemma permits independent positive rescaling of the six generating segments
\cite[Lemma~1]{McMullen1975}.

After a fixed rotation and positive dilation, we henceforth choose
\[
 \Arch:=Z(\mb{1},I).
\]
This entails no loss of generality: changing the fixed representative only rescales
$d_{\mathrm{sim}}$ and the stability constant.
The fixed zone directions satisfy
\begin{equation}\label{eq:line-relations}
 \mb n_{14}^{\times}=-\mb n_{12}^{\times}-\mb n_{13}^{\times},\qquad
 \mb n_{24}^{\times}=\mb n_{12}^{\times}-\mb n_{23}^{\times},\qquad
 \mb n_{34}^{\times}=\mb n_{13}^{\times}+\mb n_{23}^{\times}.
\end{equation}

We call a relabeling \emph{admissible} if the relabeled generators again occur in a
presentation of the form \eqref{eq:normal-form}.

\begin{lemma}[uniqueness of the zone coordinates]\label{lem:zone-uniqueness}
The six generating segments of a truncated-octahedral zonotope are determined by
its six labeled edge zones.  Two presentations in \eqref{eq:normal-form} of the same
labeled body coincide, and every admissible relabeling of the six zones is induced by a
unique element of $S_4$.
\end{lemma}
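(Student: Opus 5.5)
The plan is to first recover each generating segment from its edge zone, and then to pin down the pair $(\beta,M)$ by linear algebra in the fixed frame.

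\emph{Segments from zones.} Let $Z=\sum_k[-\tfrac12 g_k,\tfrac12 g_k]$ be a zonotope with six pairwise non-parallel generators $g_k$. Every edge of $Z$ is a translate of a segment $[-\tfrac12 g_k,\tfrac12 g_k]$, so every edge in the zone parallel to $g_k$ has length exactly $|g_k|$. Hence the labeled zone (its direction together with the common edge length) determines $\pm g_k$, and thus the centered segment. For a truncated-octahedral $Z(\beta,M)$ the six directions $M\mb n_{ij}^{\times}$ are pairwise non-parallel, since the $\mb n_{ij}^{\times}$ are. Moreover every generator actually occurs as an edge direction: $Z(\beta,M)$ is a linear image of $Z(\beta,I)$, and the combinatorial type has six zones. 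This gives the first sentence of the lemma.

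\emph{Uniqueness of the presentation.} Suppose $Z(\beta,M)=Z(\beta',M')$ with the same labeling. By the first step, $\beta_{ij}M\mb n_{ij}^{\times}=\pm\beta'_{ij}M'\mb n_{ij}^{\times}$ for every pair $i<j$. Set $L=M'^{-1}M\in\mathrm{SL}(3,\R)$. Then $L\mb n_{ij}^{\times}=\epsilon_{ij}\lambda_{ij}\mb n_{ij}^{\times}$ with $\lambda_{ij}=\beta'_{ij}/\beta_{ij}>0$ and $\epsilon_{ij}=\pm1$.

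The vectors $\mb n_{12}^{\times},\mb n_{13}^{\times},\mb n_{23}^{\times}$ form a basis, so $L$ is diagonal in that basis. Applying $L$ to the relations \eqref{eq:line-relations}, each $\mb n_{14}^{\times},\mb n_{24}^{\times},\mb n_{34}^{\times}$ must also be an eigenvector. Each of these is a sum or difference of two basis vectors, which forces the corresponding two eigenvalues to be equal. The three relations together link all three basis vectors, so $L=\mu I$. The condition $\det L=1$ then gives $\mu=1$, hence $M=M'$, all $\epsilon_{ij}=1$, and $\beta=\beta'$.

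\emph{Admissible relabelings.} Suppose a permutation $\sigma$ of the six zone labels satisfies $Z(\beta,M)=Z(\beta',M')$ with the zone labeled $ij$ in the second presentation equal to the zone labeled $\sigma(ij)$ in the first. Arguing as above, $L=M^{-1}M'$ sends each line $\R\mb n_{ij}^{\times}$ to the line $\R\mb n_{\sigma(ij)}^{\times}$. The six lines $\R\mb n_{ij}^{\times}$ carry the incidence structure of a complete quadrilateral in the projective plane: the line $\R\mb n_{ij}^{\times}$ is normal to the plane spanned by $\mb n_i,\mb n_j$. Three of the six lines are coplanar exactly when their index pairs share a common index $k$, since then they are all orthogonal to $\mb n_k$. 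These four coplanar triples are the only linear dependencies among triples, which one checks directly from \eqref{eq:line-relations}.

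A linear map preserves coplanarity, so $\sigma$ permutes these four triples. It therefore induces a permutation $\tau\in S_4$ of the indices $k$, and $\sigma(ij)=\tau(i)\tau(j)$, because the pair $ij$ is the intersection of the triples indexed by $i$ and $j$. The element $\tau$ is unique because the map $S_4\to S_6$ given by the action on pairs is injective.

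Conversely, $\tau$ is realized: the linear map $A_\tau$ with $A_\tau\mb n_k=\mb n_{\tau(k)}$ exists, since the $\mb n_k$ are affinely balanced. It satisfies $A_\tau\mb n_i\times A_\tau\mb n_j=(\operatorname{adj}A_\tau)^T\mb n_{ij}^{\times}$. Hence the relabeled generators are $\pm$ a normal-form presentation with matrix $M(\operatorname{adj}A_\tau)^{T}$, normalized exactly as in the derivation of \eqref{eq:normal-form}.

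\emph{Main obstacle.} The hardest step is the combinatorial identification in the last part: one must show that admissibility forces the zone permutation to preserve the four coplanar triples. This needs the check that no other triple of zone directions is linearly dependent, and some care with the signs $\pm$, which are harmless because each segment is centered.
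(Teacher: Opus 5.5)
Your proposal is correct and follows essentially the same route as the paper. Edges are translates of generators, and the relative linear map is diagonal in the basis $\mb n_{12}^{\times},\mb n_{13}^{\times},\mb n_{23}^{\times}$, forced to be the identity by \eqref{eq:line-relations} and $\det=1$. Relabelings are controlled by the four dependent star triples $D_i$, and your converse via the cofactor of the tetrahedral symmetry is exactly the paper's $L_\pi=(\det Q_\pi)Q_\pi$.

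The differences are minor. The paper shows every generator occurs as an edge by choosing an exposing vector orthogonal to that generator alone, where you appeal to the combinatorial type. The paper also verifies in explicit coordinates that the stars are the only dependent triples, a finite check you defer.
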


\begin{proof}
An exposed face of a Minkowski sum is the sum of the corresponding exposed faces of its
summands.  Since the six generator directions are pairwise nonparallel, every edge is
therefore a translate of exactly one generating segment.  Thus a labeled edge zone
determines both the direction and the length of its generating segment.  Conversely, for
each generator one can choose an exposing vector perpendicular to it and to no other
generator, so every generating segment occurs as an edge.

Suppose first that the labels are fixed, and write $$A:=M^{-1}M'$$ for the relative linear map
between two presentations.  Equality of the labeled generator segments implies that $A$
preserves every line $\R\mb n_{ij}^{\times}$.  It is diagonal in the basis
$\mb n_{12}^{\times},\mb n_{13}^{\times},\mb n_{23}^{\times}$, and the three relations
in \eqref{eq:line-relations} force its diagonal entries to agree.  Since both linear
factors have determinant one, this common scalar is one, and equality of the generating
segments then gives equality of all six weights.

It remains to identify the possible relabelings.  Use
$\mb n_{12}^{\times},\mb n_{13}^{\times},\mb n_{23}^{\times}$ as an ordered basis whose basis matrix has determinant one.  In this basis the other three vectors are
\[
 \mb n_{14}^{\times}=-e_1-e_2,\qquad
 \mb n_{24}^{\times}=e_1-e_3,\qquad
 \mb n_{34}^{\times}=e_2+e_3.
\]
The corresponding $3\times3$ determinants vanish precisely for the four stars
\[
 D_i=\{\R\mb n_{ij}^{\times}:j\ne i\},\qquad i=1,\ldots,4.
\]
Every linear automorphism of the line configuration permutes these four triples.  Moreover,
$\R\mb n_{ij}^{\times}$ is the unique line in $D_i\cap D_j$, so the induced permutation of the
$D_i$ determines the relabeling and belongs to $S_4$.  Conversely, let
$Q_\pi\in\mathrm O(3)$ satisfy $Q_\pi\mb n_i=\mb n_{\pi(i)}$.  
Then
$$L_\pi:=(\det Q_\pi)Q_\pi\in\mathrm{SO}(3), \qquad L_\pi(\mb n_i\times\mb n_j)=\mb n_{\pi(i)}\times\mb n_{\pi(j)}.$$
Hence every element of $S_4$ is admissible.
\end{proof}

Write $G:=M^{-1}M^{-T}$, and let $\K_4$ be the complete graph on
$\{1,2,3,4\}$.  The zonotope volume formula \cite{Shephard1974} and the
surface-area identity for a zonotope with generator vectors $g_e$
\cite{Langi2022} give
\begin{equation}\label{eq:zonotope-formulas}
\begin{aligned}
 V(\beta)&:=\vol Z(\beta,M)
 =\sum_{J\text{ spanning tree of }\K_4}\prod_{e\notin J}\beta_e,\\
 S(\beta,G)&:=\area(\partial Z(\beta,M))
 =2\sum_{b\in\F}\gamma_b(\beta)\sqrt{\mb n_b^TG\mb n_b}.
\end{aligned}
\end{equation}
The nonzero generator triples are precisely the complements of spanning trees, and the
corresponding triples of unweighted frame vectors $\mb n_{ij}^{\times}$ have determinants
of absolute value one.  Index the seven facet-normal lines by
\[
 \F=\{\br{1},\br{2},\br{3},\br{4}\}
 \sqcup\{\br{12|34},\br{13|24},\br{14|23}\}.
\]
Here $\br{i}$ labels the partition $\{i\}\mid(\{1,2,3,4\}\setminus\{i\})$, and
$\br{ij|pq}$ labels $\{i,j\}\mid\{p,q\}$.  
The first four labels correspond to the hexagonal facet pairs and the
last three to the parallelogram facet pairs.
Grouping the pairwise
cross products in the zonotope surface-area formula according to these seven normal lines
uses the identities
\[
 (\mb n_i\times\mb n_p)\times(\mb n_i\times\mb n_q)=\pm\mb n_i,
 \qquad
 (\mb n_i\times\mb n_j)\times(\mb n_p\times\mb n_q)
   =\pm(\mb n_i+\mb n_j),
\]
where $i,p,q$ are distinct in the first identity and $\{i,j,p,q\}=\{1,2,3,4\}$ in the
second.  Thus the facet data are
\begin{equation}\label{eq:facet-data}
\begin{aligned}
 \mb n_{\br{i}}&=\mb n_i,
 &\gamma_{\br{i}}&=\sum_{\substack{1\le p<q\le4\\p,q\ne i}}
                         \beta_{ip}\beta_{iq},
 \quad i=1,\ldots,4,\\
 \mb n_{\br{ij|pq}}&=\mb n_i+\mb n_j,
 &\gamma_{\br{ij|pq}}&=\beta_{ij}\beta_{pq},
 \quad \br{ij|pq}\in\F.
\end{aligned}
\end{equation}
Since $\det M=1$, volume is independent of $M$.  At $\beta=\mb{1}$ and $G=I$,
\[
\begin{aligned}
 V_0:={}&V(\mb{1})=\vol(\Arch)=16,\\
 S_0:={}&S(\mb{1},I)=6\cdot 2^{1/3}(1+2\sqrt3),\\
 \Ical(\Arch)={}&\frac{3(1+2\sqrt3)}{4^{2/3}}.
\end{aligned}
\]
Remove dilation and rotation by the exponential slice
\begin{equation}\label{eq:exp-chart}
\begin{aligned}
 \beta_{ij}&=\e^{\ell_{ij}}, &
 \ell&\in\mathcal W_0=
 \{\ell\in\R^6:\sum_{i<j}\ell_{ij}=0\},\\
 G&=\e^\sigma, &
 \sigma&\in\mathcal G_0=
 \{\sigma\in\Sym_3(\R):\tr\sigma=0\}.
\end{aligned}
\end{equation}
Here exponentiation of $\ell$ is coordinatewise.  
Take $M=\e^{-\sigma/2}$ and define
\begin{equation}\label{eq:Psi}
 \Phi(\ell,\sigma):=Z(\e^\ell,\e^{-\sigma/2}),
 \qquad
 \Psi(\ell,\sigma):=\frac{S(\e^\ell,\e^\sigma)}{V(\e^\ell)^{2/3}}.
\end{equation}
Indeed, $G^{-1}=M^TM$, so polar decomposition gives $M=QG^{-1/2}$ with $Q\in\mathrm{SO}(3)$. 
Modulo left rotations, the choice $M=G^{-1/2}$ is unique.
Thus $\Psi$ is analytic near the origin and represents $\Ical$ on the labeled similarity
space.  Write $h=(\ell,\sigma)$, $\Phi(h):=\Phi(\ell,\sigma)$, and
$|h|^2:=|\ell|^2+\|\sigma\|_F^2$.
The natural $S_4$ action is
\[
 (\pi\ell)_{\pi(i)\pi(j)}=\ell_{ij},
 \qquad \pi\sigma=Q_\pi\sigma Q_\pi^T,
\]
where the edge coordinates are indexed by unordered pairs and
$Q_\pi\in\mathrm O(3)$ is determined by $Q_\pi\mb n_i=\mb n_{\pi(i)}$.  If $\pi$ is odd,
$Q_\pi$ is improper, but replacing it by $-Q_\pi$ gives a rotation and leaves the
conjugation action unchanged.  Thus $\Phi(\pi h)$ is congruent to $\Phi(h)$, and the action
preserves $\Psi$ and the norm $|h|$.

\begin{lemma}[openness of the truncated-octahedral class]\label{lem:open-class}
There is a Hausdorff neighborhood of $\Arch$ in $\mathcal P_3$ that consists entirely of
truncated-octahedral parallelohedra.
\end{lemma}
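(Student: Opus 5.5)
The plan is to combine Fedorov's classification with a facet-counting argument under Hausdorff convergence. Every $P\in\mathcal P_3$ is of one of the five Fedorov types. The parallelepiped, hexagonal prism, rhombic dodecahedron and elongated dodecahedron have $6$, $8$, $12$ and $12$ facets respectively. It therefore suffices to prove lower semicontinuity of the facet count at $\Arch$: there is $\varepsilon>0$ such that every convex polytope $P$ with $d_H(P,\Arch)<\varepsilon$ has at least $14$ facets. Granting this, every $P\in\mathcal P_3$ in that neighborhood must be of the fifth type, the truncated octahedron, which is exactly the claim. By the paragraph after \eqref{eq:normal-form}, such $P$ then admits a presentation $Z(\beta,M)$, which is what the rest of the paper uses.

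To prove the semicontinuity I will use the support function, since Hausdorff distance between convex bodies equals the sup-norm distance of their support functions on the unit sphere. For each of the $14$ facets $F_k$ of $\Arch$, with outer unit normal $u_k$, I pick a point $x_k$ in the relative interior of $F_k$ and a radius $r>0$ small enough that the ball $B(x_k,r)$ meets $\partial\Arch$ only inside the relative interior of $F_k$. Let $P$ be a convex polytope with $d_H(P,\Arch)<\varepsilon$, where $\varepsilon\ll r$. Then $\partial P$ passes within $\varepsilon$ of $x_k$. The normal cone of $P$ at boundary points near $x_k$ must consist of directions close to $u_k$: otherwise a supporting half-space of $P$ with normal $u$ far from $u_k$ would cut off a definite portion of $\Arch$ near $x_k$, contradicting $\Arch\subset P+\varepsilon B$.

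Making this quantitative, every outer unit normal of $P$ at a boundary point in $B(x_k,r/2)$ lies within angle $O(\varepsilon/r)$ of $u_k$. The boundary portion $\partial P\cap B(x_k,r/2)$ has positive area, while the edges and vertices of $P$ have zero area. Hence this portion contains a relative-interior point of some facet of $P$, whose normal $v_k$ satisfies $|v_k-u_k|=O(\varepsilon/r)$. The $14$ normals $u_k$ are pairwise distinct, separated by a fixed angle $\theta>0$. Choosing $\varepsilon$ so that $O(\varepsilon/r)<\theta/2$ makes the normals $v_k$ pairwise distinct, so $P$ has at least $14$ facets.

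I expect the main obstacle to be the normal-cone estimate: if $u$ is an outer normal of $P$ at $y\in B(x_k,r/2)\cap\partial P$, then $\angle(u,u_k)\lesssim\varepsilon/r$. I would first try the following argument. The half-space $\{z:\langle z-y,u\rangle\le0\}$ contains $P$, hence contains $\Arch$ up to $\varepsilon$. The facet $F_k$ contains the disk of radius $r$ around $x_k$ in its plane, and $|y-x_k|$ is small in the $u_k$ direction (it is $O(\varepsilon)$ there). Testing the half-space inequality at the points $x_k\pm r w$, where $w$ is the unit vector in the facet plane along the projection of $u$, gives $r\,|\langle w,u\rangle|\le O(\varepsilon)$. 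Testing at an interior point $x_k-\rho u_k$ rules out $u\approx-u_k$. Together these bound the tangential component of $u$ by $O(\varepsilon/r)$, which is the required estimate.
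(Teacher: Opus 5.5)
Your proof is correct, but it gets the facet count by a different mechanism from the paper. The paper argues by contradiction along a sequence. Hausdorff convergence gives weak convergence of surface-area measures \cite[Thm.~4.2.1]{Schneider2014}; a weak limit of measures with at most twelve atoms has at most twelve atoms; and the surface-area measure of $\Arch$ has fourteen. You instead prove lower semicontinuity of the facet number directly, via a normal-cone estimate near relative-interior points of the facets of $\Arch$.

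The paper's route is shorter once the continuity theorem is cited. It also fits the same measure-theoretic framework that the paper reuses in the quotient-chart proposition to match the fourteen facet normals with small caps. Your route is elementary and quantitative: the neighborhood size is explicit in $r$, the depth $\rho$, and the angular separation $\theta$. It also already gives that matching, since each cap of radius $O(\varepsilon/r)$ about $u_k$ contains a facet normal of $P$.

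Two details in your sketch need a line of justification.

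First, the bound $|\langle y-x_k,u_k\rangle|\le\varepsilon$ for $y\in\partial P\cap B(x_k,r/2)$ needs both inclusions. The upper bound follows from $P\subset\Arch+\varepsilon B$. The lower bound uses $y\in\partial P$: a supporting half-space $\{z:\langle z-y,u\rangle\le0\}$ of $P$ contains $\Arch$ up to $\varepsilon$. This forces $y$ to lie within $\varepsilon$ of $\partial\Arch$, and since $\varepsilon<r/2$, within $\varepsilon$ of a point of $\operatorname{relint}F_k$.

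Second, in the tangential test you must absorb the offset $\langle y-x_k,w\rangle$, which can be as large as $r/2$. Testing at $x_k+rw$ alone then gives $\tfrac r2\langle w,u\rangle\le2\varepsilon$, which suffices.
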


\begin{proof}
By Fedorov's classification \cite{Fedorov1885}, every three-dimensional
parallelohedron belongs to one of five types.  Each of the other four types has at most
twelve facets, whereas $\Arch$ has fourteen.  If a sequence of parallelohedra of the other four types converged to $\Arch$,
its surface-area measures would converge weakly to that of $\Arch$
\cite[Thm.~4.2.1]{Schneider2014}.  After passing to a subsequence, each measure with at
most twelve atoms has a weak limit with at most twelve atoms: pad the atomic lists with
zero masses, then use compactness of $S^2$ and boundedness of the total masses.  The
surface-area measure of $\Arch$ has fourteen positive atoms, a contradiction.
\end{proof}

\begin{lemma}[stability of labeled simple polytopes]\label{lem:simple-stability}
Let $K_k\to K$ in Hausdorff distance be three-dimensional polytopes whose facets are
bijectively labeled by the same finite set $A$, and assume that every outer unit normal and support
number converges to its counterpart for $K$.  If $K$ is simple, then, for all large $k$,
the vertex-facet incidences of $K_k$ and $K$ agree.  The correspondingly labeled vertices
and edges then converge.
\end{lemma}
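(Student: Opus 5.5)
The plan is to describe vertices as intersections of three facet planes and use the simplicity of $K$ to make these intersections stable. Write each facet $F_a$ of $K$ ($a\in A$) as $\{x\in K:\langle u_a,x\rangle=h_a\}$, and write $u_a^k,h_a^k$ for the corresponding data of $K_k$. Since $K$ is a simple three-dimensional polytope, each vertex $v$ of $K$ lies on exactly three facets, say $F_a,F_b,F_c$. Simplicity also gives two facts we will need:
\begin{itemize}
\item the normals $u_a,u_b,u_c$ are linearly independent;
\item $v$ satisfies every other facet inequality strictly, that is, $\langle u_d,v\rangle<h_d$ for $d\notin\{a,b,c\}$.
\end{itemize}

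First, for large $k$ the $3\times3$ system $\langle u_e^k,x\rangle=h_e^k$ ($e=a,b,c$) has nonvanishing determinant, by convergence of the normals. So it has a unique solution $v^k$, and Cramer's rule gives $v^k\to v$. By strictness and convergence of the data, $\langle u_d^k,v^k\rangle<h_d^k$ for all $d\ne a,b,c$ once $k$ is large, and there are only finitely many $d$ and finitely many vertices. Since $K_k=\bigcap_{e\in A}\{\langle u_e^k,x\rangle\le h_e^k\}$, the point $v^k$ lies in $K_k$. It lies on exactly the three labeled facet planes $a,b,c$, so it is a vertex of $K_k$ whose incident facet set is $\{a,b,c\}$. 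This gives an injective map from the vertices of $K$ to the vertices of $K_k$ that preserves incidences.

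Second, we must rule out extra vertices of $K_k$, and this is the main obstacle. I would argue by contradiction. Suppose that along a subsequence $K_k$ has a vertex $w^k$ not of the above form. Then $w^k$ lies on some set $B_k$ of at least three labeled facets whose normals span $\R^3$. Pass to a further subsequence with $B_k=B$ constant and $w^k\to w\in K$, using that Hausdorff convergence keeps all vertices bounded. In the limit, $w$ lies on every facet plane $\langle u_e,x\rangle=h_e$ with $e\in B$, and the normals $\{u_e\}_{e\in B}$ have rank $3$, so $w$ is a vertex of $K$. Simplicity forces $B\subseteq\{a,b,c\}$, the labels of $w$, so $B=\{a,b,c\}$. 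Then $w^k$ solves the same nonsingular system as $v^k$, hence $w^k=v^k$, a contradiction. This argument needs every labeled facet of $K_k$ to be a genuine facet supported by its own plane. That holds by hypothesis, since the facets of $K_k$ are bijectively labeled by $A$.

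Finally, the vertex-facet incidences now agree. Edges of a simple $3$-polytope are the pairs of vertices sharing two facets, so the labeled edges agree too, and the edge endpoints converge, giving convergence of the edges in Hausdorff distance.
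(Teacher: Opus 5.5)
Your proposal is correct and follows essentially the same route as the paper. Both use Cramer's rule and strict slack to turn each vertex of $K$ into a vertex of $K_k$ with the same incidences, then rule out extra vertices by compactness on a fixed label set, which forces them onto the already-constructed intersection point.

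One step needs a small repair. Rank can drop in a limit, so the rank of the limit normals $\{u_e\}_{e\in B}$ does not follow from convergence. Argue instead that $w$ lies on $|B|\ge3$ distinct facets of $K$, so it is a vertex, because relative-interior points of edges or facets lie on at most two facets. Simplicity then gives $B=\{a,b,c\}$ and linear independence; this is exactly how the paper phrases it.
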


\begin{proof}
Write the facet inequalities of $K$ as
\[
 \langle n_a,x\rangle\le s_a,\qquad a\in A,
\]
and write $n_a^{(k)},s_a^{(k)}$ for the corresponding data of $K_k$.  If $v$ is a vertex
of $K$, its three incident normals have nonzero determinant.  Their perturbed hyperplanes
therefore meet in a point $v_k\to v$.  Every nonincident inequality has strictly positive
slack at $v$. 
Since there are only finitely many vertex-facet pairs, all these strict inequalities persist uniformly.  Hence every vertex of $K$ gives a vertex of $K_k$ with the same incident facets.

Conversely, suppose that $K_k$ had an additional vertex along a subsequence.  Choose three incident facets with linearly independent normals.  
After passing to a further subsequence, their labels
are fixed.  
Hausdorff convergence bounds the vertices, so they converge to a point of $K$ lying on those three
facets.  Simplicity makes that point a vertex of $K$ and makes those three labels precisely
its incident facets.  The corresponding perturbed hyperplanes have a unique intersection,
namely the vertex already constructed, a contradiction.  Thus the vertex-facet incidences
agree.  Vertices are continuous
intersections of their three incident hyperplanes, and the same is consequently true for
the labeled edges.
\end{proof}

The similarity quotient below carries the metric topology from $\delta_{\mathrm{sim}}$.
Central symmetry makes the $\mathrm{SO}(3)$- and $\mathrm O(3)$-quotients identical here:
if $Q$ is improper, then $QK=(-Q)K$ and $-Q\in\mathrm{SO}(3)$.

\begin{proposition}[Hausdorff-compatible quotient chart]\label{prop:topology}
The map $h\mapsto[\Phi(h)]$ induces a local homeomorphism from a neighborhood of $[0]$ in
$(\mathcal W_0\oplus\mathcal G_0)/S_4$ onto a neighborhood of $[\Arch]$ in the
similarity quotient of the truncated-octahedral class.  Equivalently, $[P_k]\to[\Arch]$
if and only if, after relabeling the six zones, the product-normalized representatives are
$\Phi(h_k)$ with $h_k\to0$.
\end{proposition}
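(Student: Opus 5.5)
We have to prove the two directions of the equivalence. Together with injectivity modulo $S_4$, they give a continuous bijection with continuous inverse between small neighborhoods.

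\emph{Continuity of $h\mapsto[\Phi(h)]$.} The generators $\tfrac12\e^{\ell_{ij}}\e^{-\sigma/2}\mb n_{ij}^{\times}$ depend analytically on $h$. The Minkowski sum of segments is $1$-Lipschitz in Hausdorff distance with respect to the generators, so $d_H(\Phi(h),\Arch)\to0$ as $h\to0$. Volume is continuous, so the normalized bodies $\widehat{\Phi(h)}$ also converge to $\Arch$. Hence $\delta_{\mathrm{sim}}([\Phi(h)],[\Arch])\to0$. The $S_4$-action produces congruent bodies, so this map descends to the quotient.

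\emph{Local injectivity modulo $S_4$.} Suppose $[\Phi(h)]=[\Phi(h')]$. Then $\Phi(h')=cQ\Phi(h)$ for some $c>0$ and $Q\in\mathrm{SO}(3)$. The body $cQ\Phi(h)=Z(c\e^{\ell},Q\e^{-\sigma/2})$ is again in the form \eqref{eq:normal-form}. By Lemma~\ref{lem:zone-uniqueness}, after an admissible relabeling $\pi\in S_4$ of the zones, the two presentations coincide. This gives $c\e^{\ell}=\e^{\pi\ell'}$ as weight vectors and $Q\e^{-\sigma/2}=\e^{-\pi\sigma'/2}$ up to the relabeling bookkeeping. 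The product normalization $\sum\ell_{ij}=0$ forces $c=1$. Comparing the metrics $G=M^{-1}M^{-T}$ removes $Q$ and gives $\e^{\sigma}=\e^{\pi\sigma'}$, hence $\sigma=\pi\sigma'$. So $h=\pi h'$, and the map is injective on $(\mathcal W_0\oplus\mathcal G_0)/S_4$.

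\emph{Converse: $[P_k]\to[\Arch]$ implies $h_k\to0$ after relabeling.} This is the main step.
\begin{enumerate}
\item Choose rotations with $Q_k\widehat P_k\to\Arch$ in Hausdorff distance. By Lemma~\ref{lem:open-class}, for large $k$ each $P_k$ is truncated-octahedral, and so is $Q_k\widehat P_k$. Write $Q_k\widehat P_k=Z(c_k\e^{\ell_k},M_k)$ for some presentation, with $\sum\ell_{k,ij}=0$ and $c_k>0$.
\item By \cite[Thm.~4.2.1]{Schneider2014}, the surface-area measures converge weakly. The limit has fourteen atoms, and each $Q_k\widehat P_k$ has exactly fourteen facets. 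So for large $k$ the facet normals and areas can be labeled so that they converge to those of $\Arch$.
\item Support numbers are values of the support function, which converges uniformly. So the hypotheses of Lemma~\ref{lem:simple-stability} hold, using that $\Arch$ is simple.
\item Lemma~\ref{lem:simple-stability} then shows that the labeled edges converge. Grouping edges into zones, each zone's edge vector (a generating segment, by Lemma~\ref{lem:zone-uniqueness}) converges to the corresponding generator $\pm\tfrac12\mb n_{ij}^{\times}$ of $\Arch$.
\item The zone labeling inherited from the facet labeling is an admissible relabeling, hence given by some $\pi_k\in S_4$. After applying $\pi_k$, the generators $\tfrac12 c_k\e^{\ell_{k,ij}}M_k\mb n_{ij}^{\times}$ converge to $\tfrac12\mb n_{ij}^{\times}$. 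Two subtleties arise here. First, we must check that the labeling of zones by pairs $ij$ matches the normal form, up to an admissible permutation; this holds because Lemma~\ref{lem:zone-uniqueness} recovers the stars $D_i$ from the line configuration alone. Second, each segment is only determined up to sign, which is harmless because the segments are centered.
\end{enumerate}

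\emph{Recovering the coordinates.} With $\ell_{ij}=0$ in the target, the convergence says $c_k\e^{\ell_{k,ij}}M_k$ applied to the basis $\mb n_{12}^{\times},\mb n_{13}^{\times},\mb n_{23}^{\times}$ converges to the identity. So $c_k\e^{\ell_{k,ij}}M_k\to I$ along those basis vectors.
\begin{itemize}
\item Taking determinants, and using the line relations \eqref{eq:line-relations} for the other three generators, forces every coefficient $c_k\e^{\ell_{k,ij}}\to1$ and $M_k\to I$. More precisely, $M_k$ is recovered as a continuous function of three generators, the relations then give the other three weights, and $\det M_k=1$ pins down the scalar.
\item Since $\sum\ell_{k,ij}=0$, we get $c_k\to1$ and $\ell_k\to0$.
\item Polar decomposition $M_k=R_kG_k^{-1/2}$ with $G_k\to I$ gives $\sigma_k=\log G_k\to0$. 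The rotation $R_k$ is absorbed into the similarity class, so $[P_k]=[\Phi(\pi_k^{-1}h_k)]$ with $h_k\to0$.
\end{itemize}

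The two directions show that the map and its inverse are sequentially continuous near $[0]$ and $[\Arch]$. Both spaces are metric (the $S_4$-quotient of a finite-dimensional space is metrizable), so the map is a local homeomorphism onto a neighborhood of $[\Arch]$. That the image is a neighborhood follows from the converse direction: every class sufficiently close to $[\Arch]$ is of the form $[\Phi(h)]$ with $h$ small.
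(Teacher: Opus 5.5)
Your outline follows the paper's proof step for step: weak convergence of surface-area measures, \Cref{lem:simple-stability}, zones read off from edges, the star structure, reconstruction from \eqref{eq:line-relations} and a determinant, polar decomposition, and injectivity from \Cref{lem:zone-uniqueness}. The concluding topological step, however, does not follow from what you proved. Your converse only treats sequences converging to $[\Arch]$ itself, so it gives continuity of the inverse at the single point $[\Arch]$. The claim that the map and its inverse are sequentially continuous \emph{near} $[0]$ and $[\Arch]$ is never established. Continuity of the inverse at one point does not yield a local homeomorphism, and invariance of domain is not directly available because the target is just a metric space.

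The missing idea is the compactness argument the paper uses. Take a closed $S_4$-invariant ball $\overline B$ about $0$. Your Lipschitz bound for Minkowski sums gives continuity of the induced map from $\overline B/S_4$ into the similarity quotient at every point, not only at $0$. Your injectivity argument gives injectivity. A continuous injection from a compact space into a metric space is a homeomorphism onto its image. Your converse, applied by contradiction to a sequence of classes outside the image, then shows that the image of the open ball contains a neighborhood of $[\Arch]$. The alternative would be to rerun the entire reconstruction at every base point $[\Phi(h_0)]$ near $[\Arch]$.

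Two steps of the converse are also asserted rather than proved.

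\emph{Signs of the generators.} Convergence of the unoriented segments only gives $c_k\e^{\ell_{k,ij}}M_k\mb n_{ij}^{\times}=s_{ij}g_{ij}$ with $g_{ij}\to\mb n_{ij}^{\times}$ and unknown signs $s_{ij}\in\{\pm1\}$. Centering explains why the body does not see these signs. It does not explain why $M_k\to I$, because one matrix and six positive weights produce all six vectors, so the signs cannot be flipped independently. You need the paper's argument: substituting into \eqref{eq:line-relations} forces $s_{14}=s_{12}=s_{13}$, $s_{24}=s_{12}=s_{23}$, $s_{34}=s_{13}=s_{23}$. Then $\det(g_{12},g_{13},g_{23})\to1$ together with $\det M_k=1$ makes the common sign $+1$.

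\emph{Admissibility of the inherited labeling.} The intrinsic description of the stars as the dependent triples does not by itself show that the inherited labeling is admissible. You must add that the sixteen nonstar triples of $\Arch$ remain independent for large $k$. Hence the four dependent triples of zone lines of $P_k$ are exactly $D_1,\dots,D_4$ in the inherited labels. Only then does the characterization $\R\mb n_{ij}^{\times}=D_i\cap D_j$ produce $\pi_k\in S_4$.
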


\begin{proof}
Forward continuity follows from the support function of \eqref{eq:normal-form}.  For the
converse, choose volume-normalized representatives and rotations for which
$P_k\to\Arch$ in Hausdorff distance.  Their support functions converge uniformly, and
their surface-area measures converge weakly
\cite[Thms.~1.8.11 and 4.2.1]{Schneider2014}.

Choose disjoint spherical caps around the fourteen facet normals of $\Arch$, with boundaries carrying no limiting surface-area mass.  Weak convergence makes the mass of each cap positive for all large $k$, so every cap contains at least one facet normal of $P_k$. 
The caps are disjoint and $P_k$ has exactly fourteen facets, so every cap contains exactly one facet normal and none lie outside their union.
Because the caps may be chosen with arbitrarily small radii, the uniquely matched facet normals converge to those of $\Arch$. 
Uniform support-function convergence gives convergence of their support numbers.  Since $\Arch$ is simple, \Cref{lem:simple-stability} implies that
	the labeled vertices and edges converge as well.
Fix one edge of $\Arch$ in each of its six zones and take the corresponding edges of
$P_k$.  
Their limiting directions are pairwise distinct, so for large $k$ they belong to six distinct zones. 
Since a truncated-octahedral zonotope has exactly six zones, those six zones exhaust all its zones.
By \Cref{lem:zone-uniqueness}, each is a translate of its generating segment.
Denote the resulting oriented generators by $g_{ij}$, choosing their signs by positive
inner product with the limiting generators.  Then
$g_{ij}\to\mb n_{ij}^{\times}$.
Every nonstar triple of limiting zone lines has nonzero determinant, so the corresponding
triple of the $g_{ij}$ remains independent for all large $k$.  In any normal-form indexing
there are exactly four dependent triples.  Since all sixteen nonstar triples in the limiting
indexing remain independent, its four dependent triples must be precisely
$D_1,\ldots,D_4$.  Thus the change from any normal-form indexing preserves the four
stars.  By the intersection characterization
$\R\mb n_{ij}^{\times}=D_i\cap D_j$ used in \Cref{lem:zone-uniqueness}, it is induced by
a unique element of $S_4$.  Hence the limiting labels are admissible, up to the $S_4$
action.
Since the generating segments are unoriented, write
\[
g_{ij}=s_{ij}\beta_{ij}M\mb n_{ij}^{\times},
\qquad s_{ij}\in\{\pm1\}.
\]
Substitution into \eqref{eq:line-relations}, together with
$g_{ij}\to\mb n_{ij}^{\times}$ and $\beta_{ij}>0$, shows successively that
\[
s_{14}=s_{12}=s_{13},\qquad
s_{24}=s_{12}=s_{23},\qquad
s_{34}=s_{13}=s_{23}.
\]
Thus all six signs are equal.  Since
$\det(g_{12},g_{13},g_{23})\to1$ and $\det M=1$, their common value is
$+1$.  Hence the chosen orientations agree with the normal-form
orientations.
With this labeling, the relations \eqref{eq:line-relations} become
\[
g_{14}=-\frac{\beta_{14}}{\beta_{12}}g_{12}
        -\frac{\beta_{14}}{\beta_{13}}g_{13},\qquad
g_{24}= \frac{\beta_{24}}{\beta_{12}}g_{12}
        -\frac{\beta_{24}}{\beta_{23}}g_{23},\qquad
g_{34}= \frac{\beta_{34}}{\beta_{13}}g_{13}
        +\frac{\beta_{34}}{\beta_{23}}g_{23}.
\]
The three coefficient pairs converge to $(-1,-1)$, $(1,-1)$, and $(1,1)$.  Hence,
for all sufficiently large $k$, their signs are as displayed and their absolute values
determine the positive weight ratios continuously.  Since these ratios connect all six
weights, the determinant identity
\[
 \beta_{12}\beta_{13}\beta_{23}
 =\frac{\det(g_{12},g_{13},g_{23})}
        {\det(\mb n_{12}^{\times},\mb n_{13}^{\times},\mb n_{23}^{\times})}>0
\]
fixes the remaining common scale and determines all six positive weights.  
The first three generators
then recover
\[
 M=\bigl[g_{12}/\beta_{12}\;g_{13}/\beta_{13}\;g_{23}/\beta_{23}\bigr]
   \bigl[\mb n_{12}^{\times}\;\mb n_{13}^{\times}\;\mb n_{23}^{\times}\bigr]^{-1},
\]
so $M$ also depends continuously on the generators and has determinant one.  Define
\[
 a:=\left(\prod_{i<j}\beta_{ij}\right)^{1/6}.
\]
Replacing $P_k$ by $a^{-1}P_k$ and every weight by $a^{-1}\beta_{ij}$
does not change its similarity class, leaves $M$ and $G=M^{-1}M^{-T}$ unchanged, and
produces the product normalization used in \eqref{eq:exp-chart}.  Thus, with this compatible
labeling, $a^{-1}\beta$ and $G$ vary continuously with the generators and converge to
$\mb{1}$ and $I$, respectively.  Consequently, the logarithmic coordinates
\[
 \ell_{ij}=\log(a^{-1}\beta_{ij}),\qquad \sigma=\log G
\]
converge to $(0,0)$.
Polar decomposition gives $M=QG^{-1/2}$, so
$Q^{-1}(a^{-1}P_k)=\Phi(\ell,\sigma)$.  Left rotations of $M$ leave $G$ unchanged,
while any two compatible zone labelings differ by the $S_4$ action of
\Cref{lem:zone-uniqueness}.  For injectivity, suppose that $\Phi(h)$ and $\Phi(h')$ are similar.
Transport the zones by the similarity and then apply an $S_4$-relabeling.  Labeled uniqueness shows that their weights differ by the common dilation factor.  
Since both products of weights equal one, that factor is one.  Uniqueness in the polar decomposition gives $h'=\pi h$ for $\pi\in S_4$.

Choose a sufficiently small closed coordinate ball $\overline B$ centered at $0$, and let $B$ be its
interior.  The induced map from
$\overline B/S_4$ into the similarity quotient is continuous and injective.  Since its
source is compact and its target is metric, it is a homeomorphism onto its image.  The
preceding reconstruction shows that every sequence of truncated-octahedral classes
converging to $[\Arch]$ has, after relabeling, coordinates converging to zero.  Consequently, the image of
$B/S_4$ contains a neighborhood of $[\Arch]$.  Restricting to this neighborhood completes
the local-homeomorphism claim.
\end{proof}

\section{Hessian and stability}

The full ten-variable calculation is governed by tetrahedral symmetry.  We first identify
the two isotypic components and then compute one multiplicity matrix on each.

The $S_4$ action defined above preserves $\Psi$.  Neither $\mathcal W_0$ nor $\mathcal G_0$ contains a nonzero invariant vector.
Before imposing the zero-sum and trace-zero conditions, the only invariant weight direction is common scaling, and the only invariant symmetric matrix direction is the scalar direction.  Since $\Psi$ is invariant,
\begin{equation}\label{eq:critical}
 D\Psi(0)=0.
\end{equation}
The same argument applies separately to $S$ and $V$, so their first variations vanish.

Let $E=[2,2]$ and $T=[3,1]$ denote the two- and three-dimensional irreducible
representations.  In the conjugacy-class order $1,(12),(12)(34),(123),(1234)$, both 
edge representation and $\Sym^2T$ have character $(6,2,2,0,0)$.  Removing the trivial
summand yields
\begin{equation}\label{eq:decomposition}
 \mathcal W_0\simeq E\oplus T,
 \qquad
 \mathcal G_0\simeq E\oplus T,
 \qquad
 \mathcal W_0\oplus\mathcal G_0\simeq2E\oplus2T.
\end{equation}
Geometrically, the $E$-modes change each pair of opposite zones equally and couple these changes to diagonal trace-free strains, whereas the $T$-modes change opposite zones with opposite signs and couple them to shears.  
In the edge order $(12,13,14,23,24,34)$, use the following coordinates:
\begin{equation}\label{eq:isotypic-coordinates}
\begin{aligned}
 \ell_E&=(x,y,z,z,y,x),& X&=(x,y,z),&x+y+z&=0,\\
 \sigma_E&=\operatorname{diag}(u,v,w),&U&=(u,v,w),&u+v+w&=0,\\
 \ell_T&=(\xi_1,\xi_2,\xi_3,-\xi_3,-\xi_2,-\xi_1),
   &\Xi&=(\xi_1,\xi_2,\xi_3),\\
 \sigma_T&=\begin{pmatrix}0&\zeta_3&\zeta_2\\
                    \zeta_3&0&\zeta_1\\
                    \zeta_2&\zeta_1&0\end{pmatrix},
   &\Sigma&=(\zeta_1,\zeta_2,\zeta_3).
\end{aligned}
\end{equation}
Checking adjacent transpositions shows that $X\mapsto U$ and $\Xi\mapsto\Sigma$ are
equivariant isomorphisms.  The ambient norms satisfy
\[
 |\ell_E(X)|^2=2|X|^2,\quad \|\sigma_E(U)\|_F^2=|U|^2,
 \qquad
 |\ell_T(\Xi)|^2=2|\Xi|^2,\quad
 \|\sigma_T(\Sigma)\|_F^2=2|\Sigma|^2.
\]
Both irreducibles are absolutely irreducible over $\R$.  Define the associated quadratic form by
\[
 \mathcal H(h):=\frac12D^2\Psi(0)[h,h].
\]
Schur's lemma therefore reduces $\mathcal H$ to one symmetric $2\times2$ multiplicity matrix
on the two copies of $E$ and one on the
two copies of $T$.  Since $E$ and $T$ are inequivalent, there are no $E$--$T$ cross terms.

\subsection{Scalar derivatives}
Two elementary two-parameter families determine the two diagonal and one mixed
coefficient on each isotypic component.
Use $X_E=U_E=(1,-1,0)$ and $\Xi_T=\Sigma_T=(1,0,0)$.  They arise from
\begin{equation}\label{eq:representative-paths}
\begin{aligned}
 \beta_E(\varepsilon)&=(\e^\varepsilon,\e^{-\varepsilon},1,1,
                          \e^{-\varepsilon},\e^\varepsilon),&
 G_E(\eta)&=\operatorname{diag}(\e^\eta,\e^{-\eta},1),\\
 \beta_T(\varepsilon)&=(\e^\varepsilon,1,1,1,1,\e^{-\varepsilon}),&
 G_T(\eta)&=\begin{pmatrix}1&0&0\\0&\cosh\eta&\sinh\eta\\
                            0&\sinh\eta&\cosh\eta\end{pmatrix}.
\end{aligned}
\end{equation}
Write $V_E:=V(\beta_E)$ and $S_E:=S(\beta_E,G_E)$, and define $V_T,S_T$
similarly.  Substitution in \eqref{eq:zonotope-formulas} gives
\begin{equation}\label{eq:restricted-formulas}
\begin{aligned}
 V_E(\varepsilon)&=4+8\cosh\varepsilon+4\cosh(2\varepsilon),\\
 S_E(\varepsilon,\eta)
 &=2^{7/3}(1+2\cosh\varepsilon)
      \sqrt{1+2\cosh\eta}
   +2^{4/3}\bigl(1+2\cosh(2\varepsilon+\eta/2)\bigr),\\
 V_T(\varepsilon)&=8(1+\cosh\varepsilon),\\
 S_T(\varepsilon,\eta)
 &=2^{4/3}\left[
 (1+2\e^\varepsilon)\sqrt{1+2\e^\eta}
 +(1+2\e^{-\varepsilon})\sqrt{1+2\e^{-\eta}}\right]
 +2^{4/3}\bigl(1+2\sqrt{\cosh\eta}\bigr).
\end{aligned}
\end{equation}

For $x\in\mathcal W_0$ and $y\in\mathcal G_0$, define
$\mathcal H^{\mathrm{mix}}(x,y):=\mathcal H(x+y)-\mathcal H(x)-\mathcal H(y)$.
Because volume is independent of the metric and all first variations vanish at the critical
point, for $v\in\mathcal W_0\oplus\mathcal G_0$,
\begin{equation}\label{eq:variation-rules}
 \mathcal H(v)=\frac1{2V_0^{2/3}}
 \left(D_v^2S-\frac{2S_0}{3V_0}D_v^2V\right),\qquad
 \mathcal H^{\mathrm{mix}}(x,y)=\frac1{V_0^{2/3}}D_xD_yS.
\end{equation}
Here all derivatives on the right-hand side are evaluated at $(\ell,\sigma)=(0,0)$.
Differentiating \eqref{eq:restricted-formulas} gives the six coefficients in
\Cref{tab:derivatives}.  With $|X_E|^2=|U_E|^2=2$ and
$|\Xi_T|^2=|\Sigma_T|^2=1$, invariance then gives the forms below.
\begin{table}[ht]
\centering
\caption{Second derivatives and values of $\mathcal H$ in the representative directions.}
\label{tab:derivatives}
\small
\begin{tabular}{ccccc}
\toprule
module&derivative&$D^2V$&$D^2S$&\shortstack{value of $\mathcal H$ (pure) or
 $\mathcal H^{\mathrm{mix}}$ (mixed)}\\
\midrule
$E$&$\partial_{\varepsilon\varepsilon}$&$24$&$8\cdot 2^{1/3}(2+\sqrt3)$&$(5-2\sqrt3)/2^{7/3}$\\
$E$&$\partial_{\eta\eta}$&$0$&$2^{1/3}(1+4\sqrt3)$&$(1+4\sqrt3)/(8\cdot 2^{1/3})$\\
$E$&$\partial_{\varepsilon\eta}$&$0$&$4\cdot 2^{1/3}$&$2^{-1/3}$\\
\midrule
$T$&$\partial_{\varepsilon\varepsilon}$&$8$&$8\cdot 2^{1/3}\sqrt3$&$(2\sqrt3-1)/2^{7/3}$\\
$T$&$\partial_{\eta\eta}$&$0$&$2^{4/3}(3+4\sqrt3)/3$&$(3+4\sqrt3)/(12\cdot 2^{1/3})$\\
$T$&$\partial_{\varepsilon\eta}$&$0$&$8\cdot 2^{1/3}\sqrt3/3$&$2\sqrt3/(3\cdot 2^{1/3})$\\
\bottomrule
\end{tabular}
\end{table}
\begin{align*}
 \mathcal H_E(X,U)
 &=\frac{5-2\sqrt3}{2^{10/3}}|X|^2
  +\frac{1+4\sqrt3}{16\cdot 2^{1/3}}|U|^2
  +2^{-4/3}\langle X,U\rangle,\\
\mathcal H_T(\Xi,\Sigma)
 &=\frac{2\sqrt3-1}{2^{7/3}}|\Xi|^2
  +\frac{3+4\sqrt3}{12\cdot 2^{1/3}}|\Sigma|^2
  +\frac{2\sqrt3}{3\cdot 2^{1/3}}\langle\Xi,\Sigma\rangle.
\end{align*}
For the mixed terms, the off-diagonal matrix entry is half the coefficient of the displayed
inner product.  Thus, relative to the ordered pairs $(X,U)$ and $(\Xi,\Sigma)$, the two
multiplicity matrices are
\begin{equation}\label{eq:blocks}
\begin{aligned}
 H_E&=\begin{pmatrix}
 (5-2\sqrt3)/2^{10/3}&2^{-7/3}\\
 2^{-7/3}&(1+4\sqrt3)/(16\cdot 2^{1/3})
 \end{pmatrix},\\[4pt]
 H_T&=\begin{pmatrix}
 (2\sqrt3-1)/2^{7/3}&\sqrt3/(3\cdot 2^{1/3})\\
 \sqrt3/(3\cdot 2^{1/3})&(3+4\sqrt3)/(12\cdot 2^{1/3})
 \end{pmatrix}.
\end{aligned}
\end{equation}

\subsection{Positivity and the stability estimate}

\begin{proof}[Completion of the proof of \Cref{thm:main}]
Their first diagonal entries are positive, and
\[
 \det H_E=\frac{9(2\sqrt3-3)}{2^{23/3}}>0,\qquad
 \det H_T=\frac{5+2\sqrt3}{12\cdot 2^{8/3}}>0.
\]
Hence $H_E$ and $H_T$ are positive definite.  By \eqref{eq:decomposition},
$D^2\Psi(0)$ is positive definite on all ten tangent directions.  Thus the
origin is a nondegenerate strict minimum for $\Psi$ on the ten-dimensional labeled
similarity slice.
Analyticity, \eqref{eq:critical} and Taylor's theorem give
\[
 \Psi(h)=\Psi(0)+\frac12D^2\Psi(0)[h,h]+o(|h|^2).
\]
After shrinking the chart, there are $c,\varepsilon>0$ such that
\begin{equation}\label{eq:chart-gap}
 \Psi(h)-\Psi(0)\geq c|h|^2
 \qquad(|h|<\varepsilon).
\end{equation}
The deficit therefore vanishes only at $h=0$ in the labeled chart.  Restoring scale, rotation, and finite relabeling yields precisely the similarity class of $\Arch$.
The support function is
\begin{equation}\label{eq:support}
 h_{\Phi(\ell,\sigma)}(u)=\frac12\sum_{i<j}\e^{\ell_{ij}}
 \left|u\cdot \e^{-\sigma/2}\mb n_{ij}^{\times}\right|,
 \qquad u\in S^2.
\end{equation}
For volume normalization, let
$t(h):=(V_0/V(\e^\ell))^{1/3}$.  Since $\vol(\Arch)=V_0$, the normalization in
\eqref{eq:distance} reads $$\widehat{\Phi(h)}=t(h)\Phi(h).$$
The factor $t$ is analytic and $t(0)=1$.  Fix $r>0$ small enough that
the closed ball $\{|h|\leq r\}$ is contained in the coordinate chart.
Since the scalar exponential and the matrix exponential
are $C^1$,
there is $L_r>0$ such that, whenever $|h|\leq r$,
\[
 |\e^{\ell_{ij}}-1|\leq L_r|\ell_{ij}|,\qquad
 \|\e^{-\sigma/2}-I\|_F\leq L_r\|\sigma\|_F,\qquad
 |t(h)-1|\leq L_r|h|.
\]
Moreover, $\e^{\ell_{ij}}$ and $\|\e^{-\sigma/2}\|_F$ are uniformly
bounded on this ball.  Since $\Phi(0)=\Arch$, the inequality
$\bigl||a|-|b|\bigr|\leq|a-b|$ gives, uniformly for $u\in S^2$,
\[
\begin{aligned}
 |h_{\Phi(h)}(u)-h_{\Arch}(u)|
 &\leq \frac12\sum_{i<j}
 \Bigl(
  |\e^{\ell_{ij}}-1|
  \left|u\cdot\e^{-\sigma/2}\mb n_{ij}^{\times}\right|
  +
  \left|u\cdot
  (\e^{-\sigma/2}-I)\mb n_{ij}^{\times}\right|
 \Bigr)                                                     \\
 &\leq C_1\left(\sum_{i<j}|\ell_{ij}|+\|\sigma\|_F\right)
 \leq C_2|h|
\end{aligned}
\]
for constants $C_1,C_2>0$.  The same bounds show that
$\|h_{\Phi(h)}\|_\infty$ is uniformly bounded for $|h|\leq r$.
Consequently,
\[
\begin{aligned}
 \|h_{\widehat{\Phi(h)}}-h_{\Arch}\|_\infty
 &=
 \|t(h)h_{\Phi(h)}-h_{\Arch}\|_\infty                         \\
 &\leq
 |t(h)-1|\|h_{\Phi(h)}\|_\infty
 +\|h_{\Phi(h)}-h_{\Arch}\|_\infty              \leq C_0|h|
\end{aligned}
\]
after increasing the constant if necessary.  Hausdorff distance is the
uniform distance between support functions.  Hence, choosing the identity
rotation in the defining infimum,
\begin{equation}\label{eq:support-bound}
 d_{\mathrm{sim}}(\Phi(h),\Arch)
 \leq d_H(\widehat{\Phi(h)},\Arch)
 =\|h_{\widehat{\Phi(h)}}-h_{\Arch}\|_\infty
 \leq C_0|h|.
\end{equation}  
Take $c_{\mathrm{sim}}:=c/(2C_0^2)$. 
This choice makes the asserted inequality strict away from the Archimedean similarity class.  
By \Cref{lem:open-class} and \Cref{prop:topology}, after shrinking the Hausdorff neighborhood
$\mathcal U$, every $P\in\mathcal U$ is truncated-octahedral and its similarity class has
a lift $h$ with $|h|<\varepsilon$.  Any two such lifts differ by the orthogonal $S_4$
action, so $|h|$ is independent of the lift.  Similarity
invariance, \eqref{eq:chart-gap}, and \eqref{eq:support-bound} give
\[
 \Ical(P)-\Ical(\Arch)=\Psi(h)-\Psi(0)
 \geq c|h|^2
 \geq \frac{c}{C_0^2}d_{\mathrm{sim}}(P,\Arch)^2
 =2c_{\mathrm{sim}}d_{\mathrm{sim}}(P,\Arch)^2.
\]
This proves \eqref{eq:intrinsic-stability}.  If $P$ is not similar to $\Arch$, then
$d_{\mathrm{sim}}(P,\Arch)>0$, and the last bound is strictly larger than the right-hand
side of \eqref{eq:intrinsic-stability}.  If $P$ is similar to $\Arch$, both sides vanish.
\end{proof}

\section*{Acknowledgments}

The author would like to thank Thomas Hales for supervising the thesis this article grew out of, and Dima Arinkin for teaching the representation theory essential to the proof.

\section*{Subsequent developments}

Following the complete resolution of the truncated octahedral conjecture by Hales and Song \cite{HalesSong2026} and independently by Cesaroni and Novaga \cite{CN26b}, this article is submitted as a historical record and for its independent methodological interest.  Specifically, the representation theoretic reduction via Schur's lemma provides a concise framework for analyzing higher-dimensional stability problems on zonotopes.

\bigskip

\noindent
Department of Mathematics, University of Wisconsin, Madison, WI 53706
\\
\textit{Email address}: \url{lark.song@wisc.edu}

\end{document}